\newtheorem{theo}{Theorem}
\newtheorem{coro}{Corollary}
\newtheorem{lemm}{Lemma}
\theoremstyle{remark}
\newtheorem{rema}{\bf Remark}
\begin{document}

\title{Automorphism groups of origami curves}
\author{Rub\'en A. Hidalgo}

\address{Departamento de Matem\'atica y Estad\'{\i}stica, Universidad de La Frontera.  Temuco, Chile}
\email{ruben.hidalgo@ufrontera.cl}

\thanks{Partially supported by Project Fondecyt 1190001}
\subjclass[2010]{30F40, 14H30, 32G15}
\keywords{Riemann surfaces, origamis, automorphisms}

\maketitle

\begin{abstract}
A closed Riemann surface $S$ (of genus at least one) is called an origami curve if it admits a non-constant holomorphic map $\beta:S \to E$ with at most one branch value, where $E$ is a genus one Riemann surface. In this case, $(S,\beta)$ is called an origami pair and ${\rm Aut}(S,\beta)$ is the group of conformal automorphisms $\phi$ of $S$ such that $\beta=\beta \circ \phi$.  Let $G$ be a finite group. It is a known fact that $G$ can be realized as a subgroup of ${\rm Aut}(S,\beta)$ for a suitable origami pair $(S,\beta)$. It is also known that $G$ can be realized as a group of conformal automorphisms of a Riemann surface $X$ of genus $g \geq 2$ and with quotient orbifold $X/G$ also of genus $\gamma \geq 2$. Given a conformal action of $G$ on a surface $X$ as before, 
we prove that there is an origami pair $(S,\beta)$, where $S$ has genus $g$ and $G \cong {\rm Aut}(S,\beta)$ such that the actions of ${\rm Aut}(S,\beta)$ on $S$ and that of $G$ on $X$ are topologically equivalent. 

\end{abstract}

\section{Introduction}
Let $S$ be a closed Riemann surface of genus $g \geq 1$ and let us denote by ${\rm Aut}(S)$ its group of conformal automorphisms.
We say that $S$ is an {\it origami curve} if it admits a non-constant holomorphic map $\beta:S \to E$ having at most one branch value, where $E$ is a genus one Riemann surface; we say that $\beta$ is an {\it origami map} and that $(S,\beta)$ is an {\it origami pair}. By the Riemann-Hurwitz formula, $\beta$ has no branched values if and only if $g=1$.
We denote by ${\rm Aut}(S,\beta)$ the deck group of $\beta$, that is, the subgroup of ${\rm Aut}(S)$ formed by those automorphisms $h$ such that $\beta=\beta \circ h$. If the degree of $\beta$ is equal to the order of ${\rm Aut}(S,\beta)$, then the origami pair is regular. The origami pair $(S,\beta)$ is called {\it uniform} if all the $\beta$-preimage of its branch value have the same local degree (regular origami pairs are necessarily uniform ones and non-uniform ones must be of genus $g \geq 2$).

Topologically, an origami can  also be described as follows. Consider a finite collection of disjoint unit squares in the plane, with their sides parallels to the coordinate axes. Then consider a gluing of the sides (by translations) such that:
\begin{enumerate}
\item[(i)] each left edge is glued to a unique right edge and vice versa,
\item[(ii)] each lower edge is glued to a unique upper edge and vice versa,
\item[(iii)] after the gluing process of all the sides we obtain a connected surface $Y$.
\end{enumerate}

Let $E_{0}$ be the torus obtained by gluing the left (respectively, the lower) side with the right (respectively, upper) side of the unit square. The above gluing process provides a (branched) covering map $\beta:Y \to E_{0}$, which can only have a branch value at the point of $E_{0}$ coming from the vertices of the unit square (the critical points are the corresponding points in $Y$ coming from the vertices of the used glued squares). We say that the above is a topological origami. If we provide of a Riemann surface structure $E$ to $E_{0}$, then we may lift it under $\beta$ to a Riemann surface structure $S$ on $Y$ such that $\beta:S \to E$ is an origami map. Similarly, origamis can be described by finite index subgroups of the free group of rank two. Details can be found, for instance, in \cite{HS,Lochak,S}.

Let $S$ be a closed Riemann surface of genus $g\geq 2$. It is kown that  $|{\rm Aut}(S)| \leq 84(g-1)$ (Hurwitz's upper bound \cite{Hurwitz}). Hurwitz's upper bound is attained for infinitely many values of $g$ (and also it is not for infinitely many others values of $g$) \cite{Macbeath}.
A group $H <{\rm Aut}(S)$  is called an origami group for $S$ if $S/H$ is a Riemann orbifold of genus one with exactly one cone point. In this case, as a consequence of the Riemann-Hurwitz formula, $|H| \leq 12(g-1)$ and that it can be generated by two non-commuting elements. If $(S,\beta)$ is a regular origami pair, then its deck group ${\rm Aut}(S,\beta)$ is an origami group.

In \cite{Hurwitz}, Hurwitz showed that every finite group $G$ can be realized as a group of conformal automorphisms of some closed Riemann surface of genus $g \geq 2$. In \cite[Thm. 4]{Greenberg2}, Greenberg observed that the Riemann surface can be chosen to have $G$ as its full group of conformal automorphisms. As a free group of rank at least two can be seen as a finite index subgroup of the free group of rank two, it follows that $G$ can be also realized as a subgroup of ${\rm Aut}(S,\beta)$ for a suitable origami pair $(S,\beta)$ of genus $g \geq 2$ (in that case, the Riemann orbifold $S/G$ necessarily has genus $\gamma \geq 1$ and, moreover, if $\gamma=1$, then $S/G$ has exactly one cone point). 

For $j=1,2$, let $X_{j}$ be a closed orientable surface and let $G_{j}$ be a finite group 
of orientation-preserving self-homeomorphisms of $X_{j}$. We assume $G_{1} \cong G_{2}$ (isomorphic groups). 
We say that the actions of $G_{1}$ and $G_{2}$ are {\it topologically equivalent} if there is an orientation-preserving homeomorphism $F:X_{1} \to X_{2}$ such that $F G_{1} F^{-1}=G_{2}$. As a consequence of the uniformization theorem, every finite group of orientation-preserving self-homeomorphisms of a closed orientable surface $X$ is topologically equivalent to the action of an isomorphic group of conformal automorphisms of a closed Riemann surface (see also \cite{Kerckhoff}). 

Let us assume that we realize $G$ as a group of conformal automorphisms of a closed Riemann surface $X$, of genus $g \geq 2$, and let us assume the Riemann orbifold $X/G$ has genus $\gamma \geq 1$. If $\gamma=1$ and $X/G$ has 
 exactly one cone point, then $G={\rm Aut}(X,\beta)$ for the origami pair $(X,\beta)$, where $\beta:X \to X/G$ is a regular branched coevering with ${\rm deck}(\beta)=G$. The following result takes care of the case when $X/G$ has genus at least two.

\begin{theo}\label{main1}
Let $G$ be a finite group of conformal automorphisms of  a closed Riemann surface $X$ of genus $g \geq 2$ such that $X/G$ is an orbifold of genus $\gamma \geq 2$.
Then there is an origami pair $(S,\beta)$, where $S$ has genus $g$, $G \cong {\rm Aut}(S,\beta)$ and the actions of ${\rm Aut}(S,\beta)$ and $G$ are topologically equivalent. 
\end{theo}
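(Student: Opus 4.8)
The plan is to factor any candidate origami map through the quotient $X/G$ and then to build the map to the torus by hand on that quotient, choosing it generic enough that no deck symmetries beyond $G$ can survive. Write $Y=X/G$ and let $\pi:X\to Y$ be the regular branched covering with ${\rm deck}(\pi)=G$; its branch locus is the set of cone points $p_{1},\dots,p_{r}$ of $Y$, of orders $m_{1},\dots,m_{r}$, and the whole action is encoded by the genus $\gamma$, these orders, and a surjection $\theta:\pi_{1}^{\rm orb}(Y)\to G$. Since an origami map $\beta$ must satisfy $\beta=\beta\circ h$ for every $h\in G$, it is $G$-invariant and so descends to $\bar\beta:Y\to E$ with $\beta=\bar\beta\circ\pi$. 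I would therefore search for a branched covering $\bar\beta:Y\to E$ onto a genus one surface whose branch values reduce to a single point $e_{0}\in E$ and with $p_{i}\in\bar\beta^{-1}(e_{0})$ for all $i$: a short computation of the ramification of $\bar\beta\circ\pi$ shows that exactly under these two conditions does $\beta=\bar\beta\circ\pi$ have at most one branch value, making $(S,\beta)$ an origami pair. In short, $\bar\beta$ must be an origami map of the surface $Y$ carrying every cone point to its unique branch value.

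To produce $\bar\beta$ I would work topologically first. Realize the genus $\gamma$ surface underlying $Y$ as a square-tiled surface $\bar\beta:Y\to E_{0}$, using enough squares that the vertex set $\bar\beta^{-1}(e_{0})$ is large, and place the $r$ cone points at $r$ of these vertices. Endow $E_{0}$ with a complex structure $E$ and pull it back first under $\bar\beta$ and then under the $\theta$-cover $\pi$; this turns $\pi$, $\bar\beta$ and $\beta=\bar\beta\circ\pi$ into holomorphic maps and makes $G$ act by conformal automorphisms of the resulting surface $S$. Because $S$ is the $\theta$-cover of a genus $\gamma$ surface branched exactly over the $p_{i}$ with orders $m_{i}$, it has genus $g$ and carries the same data $(\gamma;m_{1},\dots,m_{r};\theta)$ as $X$; hence the $G$-action on $S$ is topologically equivalent to the given one on $X$. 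By construction $e_{0}$ is the only branch value of $\beta$, so $(S,\beta)$ is an origami pair and $G\leq{\rm Aut}(S,\beta)$.

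The heart of the argument, and the step I expect to be the main obstacle, is the equality ${\rm Aut}(S,\beta)=G$. Set $\hat G={\rm Aut}(S,\beta)\supseteq G$. Since $\beta$ is $\hat G$-invariant it factors as $S\to S/\hat G\xrightarrow{\psi}E$, and the natural covering $c:Y=S/G\to S/\hat G$ satisfies $\bar\beta=\psi\circ c$ with $\deg c=[\hat G:G]$; thus $\hat G\neq G$ would force $\bar\beta$ to factor through a branched covering $c$ of degree $>1$. To exclude this I would impose two genericity conditions on the topological origami above. First, choose the two permutations recording the horizontal and vertical gluings so that the monodromy group of $\bar\beta$ acts primitively on its fibre of degree $m$; then $\bar\beta$ has no intermediate covering, so either $\deg c=1$, giving $\hat G=G$, or $\deg\psi=1$. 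Second, arrange the vertices so that the local degrees of $\beta$ over $e_{0}$ are not all equal — for instance by keeping at least one unramified non-cone vertex together with a ramified one — so that $\beta$ is non-uniform. The remaining case $\deg\psi=1$ means $S/\hat G\cong E$, i.e. $\beta$ is a regular origami pair; but regular origami pairs are uniform, contradicting non-uniformity, and hence $\hat G=G$. The one technical point needing care is the simultaneous realizability of all combinatorial demands on $\bar\beta$ — a connected square-tiled surface of genus $\gamma$, with primitive monodromy, with fibre over $e_{0}$ large enough to host the $r$ cone points, and with non-constant ramification profile over $e_{0}$. Since the total ramification is fixed at $2\gamma-2$ by Riemann–Hurwitz while the number of vertices grows with the number of squares, these demands become compatible for large degree, and the existence of a primitive pair of gluing permutations with a prescribed cycle type for their commutator is a standard Hurwitz-type existence statement; assembling these pieces yields the desired origami pair $(S,\beta)$.
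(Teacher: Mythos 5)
Your proposal is correct and is essentially the paper's own proof: both factor the origami map through the quotient $Y=X/G$, build a non-uniform origami on a genus $\gamma$ surface with the $r$ cone points placed in the fiber over its unique branch value, transport the complex structure so that the resulting conformal $G$-action on $S$ is topologically equivalent to the given one on $X$, and then prove ${\rm Aut}(S,\beta)=G$ by combining a no-proper-factorization property of the quotient origami with the fact that a regular origami pair must be uniform. The only differences are implementational: where you demand primitive monodromy (leaving its existence as a ``standard Hurwitz-type'' fact), the paper's Lemma 1 explicitly constructs the quotient origami with \emph{prime} degree, which gives the dichotomy $\deg c=1$ or $\deg\psi=1$ for free (and indeed forces primitivity); and where you pull back the complex structure topologically through the two covers, the paper obtains the topologically equivalent conformal action via quasiconformal deformation of the uniformizing Fuchsian group.
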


As every finite group acts on a Riemann surface as a group of conformal automorphisms with quotient Riemann orbifold of genus at least two, Theorem \ref{main1} asserts the following.

\begin{coro}\label{main0}
Every finite group is isomorphic to ${\rm Aut}(S,\beta)$ for a suitable origami pair $(S,\beta)$ of genus at least two. 
\end{coro}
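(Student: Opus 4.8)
The plan is to take the given conformal action of $G$ on $X$ with $X/G$ of genus $\gamma \geq 2$ and modify the quotient orbifold so that it becomes a genus one orbifold with a single cone point, while preserving the topological type of the $G$-action on the covering surface $S$ of genus $g$. Concretely, the action of $G$ on $X$ is encoded by a surface-kernel epimorphism $\theta\colon \Gamma \to G$, where $\Gamma$ is a Fuchsian group (an orbifold fundamental group) of signature $(\gamma; m_{1},\dots,m_{r})$ with $\gamma \geq 2$; here $\theta$ sends the canonical hyperbolic generators $a_{1},b_{1},\dots,a_{\gamma},b_{\gamma},x_{1},\dots,x_{r}$ to elements of $G$ satisfying the single long relation $\prod [a_{i},b_{i}] \prod x_{j}=1$ together with $x_{j}^{m_{j}}=1$, and with the images generating $G$. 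I would first reduce to the cleanest case: by choosing the origami pair to be \emph{regular}, it suffices to produce a surface-kernel epimorphism onto $G$ from a genus one orbifold group with exactly one cone point whose covering surface has the prescribed genus $g$ and on which the $G$-action is topologically equivalent to the original.

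The key computation is the Riemann--Hurwitz bookkeeping. First I would record that the genus $g$ of $X$, the group order $|G|$, the quotient genus $\gamma$ and the branching data $(m_{1},\dots,m_{r})$ are linked by
\begin{equation*}
2g-2 = |G|\Bigl(2\gamma-2 + \sum_{j=1}^{r}\bigl(1-\tfrac{1}{m_{j}}\bigr)\Bigr).
\end{equation*}
For the target origami pair I want a regular branched cover $\beta\colon S \to E$ of a genus one surface $E$ with a single branch value of some order $n$, so that $S$ has genus $g$ with the same relation specialized to $\gamma'=1$, $r'=1$, $m'_{1}=n$:
\begin{equation*}
2g-2 = |G|\bigl(2\cdot 1 - 2 + (1-\tfrac{1}{n})\bigr) = |G|\bigl(1-\tfrac{1}{n}\bigr).
\end{equation*}
This forces a specific arithmetic value of $n$, namely $n = |G|/(|G|-(2g-2))$ when that is a positive integer dividing appropriately; more honestly, I expect one does \emph{not} get to keep the genus fixed for free, so the real move is to instead fix the topological type of the action and only afterward read off which genus one origami it produces, verifying it equals $g$.

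The heart of the argument, therefore, is the following transfer step: I would construct a surface-kernel epimorphism $\eta\colon \Lambda \to G$ from the orbifold group $\Lambda$ of signature $(1; n)$ by \emph{folding} the given $\theta$. The natural device is the theory of topological equivalence via Nielsen-type moves / braid actions on generating vectors, together with the classical fact (used implicitly in the excerpt's remark that a free group of rank $\geq 2$ sits inside the free group of rank $2$) that one can absorb the higher genus handles of $\Gamma$ and its torsion into a single commutator-plus-one-relator presentation for a genus one one-cone-point group. I would argue that since $G$ is generated by $\theta(a_{1}),\theta(b_{1})$ together with the remaining generators, and the genus is $\geq 2$ giving at least four handle generators and hence ample freedom, one can apply Nielsen moves on the handle generators to rewrite the product of commutators so that all the branching is concentrated into one torsion element of order $n$ on a genus one surface, \emph{without} changing the topological conjugacy class of the resulting $G$-action. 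The condition $\gamma \geq 2$ is exactly what supplies enough handle generators to perform this concentration while keeping the images generating $G$ and the surface genus equal to $g$.

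The main obstacle I anticipate is precisely guaranteeing that the folded epimorphism $\eta$ is still surjective with the \emph{same} topological type, and that the single cone order $n$ works out to an admissible integer realizing genus exactly $g$; controlling surjectivity after Nielsen moves is delicate because concentrating branching can accidentally shrink the image. I would handle this by keeping at least one free handle pair in reserve to restore any lost generators (using that $a_{i},b_{i}$ can be adjusted to hit any prescribed pair of elements via commutator identities in $G$), and by choosing $n$ to be a multiple of the exponent of $G$ if necessary, adding a compensating free cover to fix the genus. Finally, to upgrade the subgroup realization to $G \cong {\rm Aut}(S,\beta)$ \emph{exactly} (full equality, not mere containment), I would invoke the Greenberg-type maximality argument already cited in the excerpt: among the Riemann surface structures on the topological origami $Y$, a generic one in the relevant Teichm\"uller locus has no extra automorphisms commuting with $\beta$, so the deck group is exactly $G$, while topological equivalence of the actions is preserved throughout because all modifications were performed at the level of the topological branched cover.
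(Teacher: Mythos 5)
Your reduction to \emph{regular} origami pairs cannot work, and this is the fatal gap. If $(S,\beta)$ is a regular origami pair of genus $\geq 2$, then $E=S/{\rm Aut}(S,\beta)$ is a genus one orbifold with exactly one cone point of some order $n\geq 2$, whose orbifold group is $\Lambda=\langle a,b,x \, : \, [a,b]x=1=x^{n}\rangle$. A surface-kernel epimorphism $\Lambda \to G$ forces $G$ to be generated by the images of $a$ and $b$, with $[\theta(a),\theta(b)]$ of order exactly $n\geq 2$. Hence the deck group of any regular origami pair of genus $\geq 2$ is necessarily $2$-generated and non-abelian (as the paper itself notes: it is generated by two non-commuting elements). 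Since the corollary concerns \emph{every} finite group --- including all abelian groups and non-$2$-generated groups such as $(\mathbb{Z}/2\mathbb{Z})^{3}$ --- no amount of Nielsen moves or ``folding'' of $\theta$ can produce the signature $(1;n)$ epimorphism you want; the obstruction is group-theoretic, not a matter of bookkeeping. Two further points are also unsalvageable as stated: an action with quotient signature $(1;n)$ can never be topologically equivalent to one with quotient genus $\gamma\geq 2$, because topologically equivalent actions have homeomorphic quotient orbifolds; and even the genus cannot be preserved, since quotient genus $\geq 2$ forces $2g-2\geq 2|G|$ while signature $(1;n)$ forces $2g-2=|G|\left(1-\tfrac{1}{n}\right)<|G|$.

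Your closing ``Greenberg-type genericity'' step is also unsound: ${\rm Aut}(S,\beta)$ is the deck group of the branched covering $\beta$, a topological invariant of the covering, and since the complex structure on $S$ is the lift of the one on $E$, the only available deformation (varying the structure on $E$) cannot change it; Greenberg's theorem concerns the full group ${\rm Aut}(S)$, not deck groups. The paper's route is genuinely different and avoids all of this: it keeps the quotient genus $\gamma\geq 2$, builds (Lemma 1) a \emph{non-uniform} origami $\delta:R\to E$ of \emph{prime} degree on a genus $\gamma$ surface whose branch fiber has at least $r$ points, uses quasiconformal deformation to place the cone points of $S/\widehat{G}\cong R$ inside $\delta^{-1}(q)$ (so the $G$-action is carried over up to topological equivalence), and sets $\beta=\delta\circ\eta$ with $\eta:S\to R$ the quotient map. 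Equality ${\rm Aut}(S,\beta)=\widehat{G}$ then follows combinatorially: a strictly larger deck group would make $\delta$ factor nontrivially, primality of $\deg\delta$ would then force $(S,\beta)$ to be regular, hence uniform, contradicting the non-uniformity inherited from $(R,\delta)$. The corollary is then immediate from this theorem together with the classical fact that every finite group acts conformally with quotient orbifold of genus $\geq 2$.
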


We may define the origami genus of the finite group $G$ as the lowest genus $og(G)\geq 2$ such that $G$ is embedded in ${\rm Aut}(S,\beta)$ for a suitable origami $(S,\beta)$ with $S$ of genus $og(G)$. As a consequence of Theorem \ref{main1} we have the following simple fact.

\begin{coro}\label{coro2}
Let $G$ be a finite group and let $\sigma^{*}(G)$ be the minimal genus for a conformal action of $G$ as a group of conformal automorphisms of a surface of genus at least two and quotient orbifold also of genus at least two. Then $\sigma^{*}(G)=og(G)$.
\end{coro}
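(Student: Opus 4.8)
The plan is to establish the equality $\sigma^{*}(G)=og(G)$ by proving the two inequalities $og(G)\le\sigma^{*}(G)$ and $\sigma^{*}(G)\le og(G)$ separately, the first being an essentially immediate application of Theorem \ref{main1} and the second requiring a short additional argument about the genus of the quotient orbifold.

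For $og(G)\le\sigma^{*}(G)$, I would begin with a conformal action of $G$ realizing the minimal genus $\sigma^{*}(G)$: a closed Riemann surface $X$ of genus $\sigma^{*}(G)\ge 2$ on which $G$ acts with quotient orbifold $X/G$ of genus $\gamma\ge 2$. Theorem \ref{main1} applies verbatim to this action and produces an origami pair $(S,\beta)$ with $S$ of genus $\sigma^{*}(G)$ and $G\cong{\rm Aut}(S,\beta)$, with the two actions topologically equivalent. In particular $G$ embeds in ${\rm Aut}(S,\beta)$ through an origami whose underlying surface has genus $\sigma^{*}(G)$, so by the very definition of the origami genus this forces $og(G)\le\sigma^{*}(G)$.

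For the reverse inequality I would take an origami pair $(S,\beta)$ attaining $og(G)$, so that $S$ has genus $og(G)\ge 2$ and $G$ embeds in ${\rm Aut}(S,\beta)$. The group $G$ then acts conformally on the genus-$og(G)$ surface $S$, and since every element of ${\rm Aut}(S,\beta)$ preserves the fibres of $\beta$, the origami map is $G$-invariant and descends to a non-constant holomorphic map $\bar\beta:S/G\to E$. If I can show that the quotient orbifold $S/G$ has genus $\gamma\ge 2$, then this conformal action of $G$ on $S$ is of exactly the type measured by $\sigma^{*}(G)$, and comparing genera yields $\sigma^{*}(G)\le og(G)$, completing the proof.

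Verifying that genus bound is the step I expect to be the main obstacle. The map $\bar\beta:S/G\to E$ only forces $\gamma\ge 1$ by Riemann--Hurwitz, and a regular origami would give $\gamma=1$; so the crux is to rule out that the genus-minimal realization of $G$ as an origami deck group has a quotient orbifold of genus one or zero. I would handle this using the topological-equivalence conclusion of Theorem \ref{main1}: the origamis it produces are topologically equivalent to a genus-$\gamma\ge 2$-quotient action and hence themselves have quotient orbifold of genus $\gamma\ge 2$. The plan is therefore to argue that the minimum defining $og(G)$ is already attained by such an origami, matching each low-genus origami realization of $G$ at no greater surface genus by one coming from Theorem \ref{main1} with quotient genus at least two, so that the two extremal genera are computed over the same family of quotient data and must coincide.
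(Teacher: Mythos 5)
Your first inequality, $og(G)\le\sigma^{*}(G)$, is correct and is essentially everything the paper itself supplies: Corollary~\ref{coro2} is stated there with no proof beyond the phrase ``as a consequence of Theorem~\ref{main1}'', and that consequence is exactly your first paragraph. The substance of your proposal therefore lies in the reverse inequality, and there your argument has a genuine gap, one you half-acknowledge yourself. The final step --- ``argue that the minimum defining $og(G)$ is already attained by an origami with quotient genus $\ge 2$'' --- is not an argument but a restatement of what must be proved, and Theorem~\ref{main1} cannot supply it: its \emph{hypothesis} is a conformal action whose quotient orbifold already has genus $\ge 2$, so it says nothing about an origami pair $(S,\beta)$ with $G\le {\rm Aut}(S,\beta)$ whose quotient $S/G$ has genus $1$. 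Those are precisely the realizations you would need to ``match'', and there is no input you could feed to Theorem~\ref{main1} to match them.

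Worse, no argument of this shape can exist, because such genus-one-quotient realizations can occur strictly below $\sigma^{*}(G)$. Take $G={\mathbb Z}_2$. Let $\iota:E'\to E$ be a degree-two isogeny of genus-one surfaces, pick distinct points $p_{1},p_{2}\in E'$ with $\iota(p_{1})=\iota(p_{2})=q$, and let $\pi:S\to E'$ be a double cover branched exactly over $p_{1}$ and $p_{2}$ (it exists, and Riemann--Hurwitz gives $S$ genus $2$). Then $\beta=\iota\circ\pi:S\to E$ has $q$ as its unique branch value, so $(S,\beta)$ is an origami pair of genus $2$, and the covering involution of $\pi$ lies in ${\rm Aut}(S,\beta)$; hence $og({\mathbb Z}_2)=2$. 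On the other hand, any conformal ${\mathbb Z}_2$-action with quotient orbifold of genus $\ge 2$ lives on a surface of genus $\ge 3$ by Riemann--Hurwitz, so $\sigma^{*}({\mathbb Z}_2)=3$. So with the paper's literal definition of $og(G)$ (an embedding of $G$ into ${\rm Aut}(S,\beta)$, with no constraint on the genus of $S/G$), the inequality $\sigma^{*}(G)\le og(G)$ you set out to prove is false, not merely unproven. The difficulty you isolated --- ruling out quotient genus one --- is the real obstruction; repairing the statement requires either restricting the origamis counted by $og(G)$ (e.g.\ demanding $S/G$ of genus $\ge 2$) or relaxing $\sigma^{*}(G)$ to allow quotient genus $\ge 1$, the genus-one case being handled by exactly the isogeny construction above.
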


\begin{rema}
A version of Theorem \ref{main1} at the level of dessins d'enfants was previously obtained in \cite{H:dessins}. At this level, Corollary \ref{coro2} asserts that the strong symmetric genus of a finite group is also the its minimal genus action on dessins d'enfants.
\end{rema}


\section{Proof of theorem \ref{main1}}
Let us start with the following helpful observation.

\begin{lemm}\label{lema1}
For each pair of integers $\gamma \geq 2$ and $r \geq 0$, there is a non-uniform origami pair $(R,\delta)$, where $R$ is closed Riemann surface of genus $\gamma$, $\delta$ has some prime degree and whose fiber over its branch value of cardinality at least $r$.
\end{lemm}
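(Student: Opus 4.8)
The plan is to construct the required origami pair through the combinatorial (square-tiling) description of origamis recalled in the introduction. A connected topological origami of degree $d$ over the square torus $E_{0}$ is encoded by a pair of permutations $\sigma,\tau\in S_{d}$ --- the horizontal and vertical edge-gluings of the $d$ squares --- subject to the condition that $\langle\sigma,\tau\rangle$ acts transitively on $\{1,\dots,d\}$, so that the resulting surface is connected. The only possible branch value is the image of the common corner of the squares, and its preimages in $R$ correspond bijectively to the cycles of the commutator $[\sigma,\tau]=\sigma\tau\sigma^{-1}\tau^{-1}$, a cycle of length $\ell$ producing a preimage of local degree $\ell$. Equipping $E_{0}$ with any genus-one Riemann surface structure $E$ and lifting it through the covering, as explained in the introduction, produces an origami pair $(R,\delta)$ with $\delta$ holomorphic of degree $d$. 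Thus I must exhibit, for $d$ a suitable prime, a transitive pair $(\sigma,\tau)$ whose commutator has a prescribed cycle structure.

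By the Riemann--Hurwitz formula, $\delta:R\to E$ satisfies $2-2g(R)=-\sum_{i}(\ell_{i}-1)$, where the $\ell_{i}$ are the lengths of the cycles of $[\sigma,\tau]$; equivalently $g(R)=\gamma$ precisely when $\sum_{i}(\ell_{i}-1)=2\gamma-2$, and the cardinality of the fiber over the branch value equals the total number of cycles of $[\sigma,\tau]$, namely $d-(2\gamma-2)$. The origami is non-uniform exactly when $[\sigma,\tau]$ has at least one fixed point together with at least one nontrivial cycle. Hence it suffices to realize, for a large prime $p$, a transitive pair in $S_{p}$ whose commutator is a product of $2\gamma-2$ disjoint transpositions together with $p-4\gamma+4\ge 1$ fixed points.

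The construction I propose is explicit. I take $\sigma=(1\,2\,\cdots\,p)$, the standard $p$-cycle; since $\langle\sigma\rangle$ is already transitive, transitivity of $\langle\sigma,\tau\rangle$ holds for \emph{every} choice of $\tau$, which removes the usual connectivity obstruction for free. For $\tau$ I take the involution
$$\tau=\prod_{j=1}^{\gamma-1}(4j-2,\,4j)=(2,4)(6,8)\cdots(4\gamma-6,\,4\gamma-4),$$
supported on even points only. Its $\sigma$-conjugate $\tau'=\sigma\tau\sigma^{-1}=(3,5)(7,9)\cdots(4\gamma-5,\,4\gamma-3)$ is supported on odd points, so $\tau$ and $\tau'$ have disjoint support and commute; consequently
$$[\sigma,\tau]=(\sigma\tau\sigma^{-1})\tau^{-1}=\tau'\tau$$
is precisely a product of $2(\gamma-1)$ disjoint transpositions, with all points outside $\{2,3,\dots,4\gamma-3\}$ fixed.

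It remains to fix the prime and verify the claims. I choose any prime $p\ge\max\{\,r+2\gamma-2,\;4\gamma-3\,\}$, which exists because there are infinitely many primes; for this $p$ all support points lie in $\{2,\dots,4\gamma-3\}\subset\{1,\dots,p\}$ with no wrap-around, so the computation above is valid. Then $[\sigma,\tau]$ has $2(\gamma-1)$ nontrivial cycles (local degree $2$) and $p-4\gamma+4\ge 1$ fixed points (local degree $1$), so $\delta$ is non-uniform; Riemann--Hurwitz gives $g(R)=\gamma$; the degree $p$ is prime; and the fiber over the branch value has cardinality $p-2\gamma+2\ge r$. Lifting a genus-one structure as above yields the desired non-uniform origami pair $(R,\delta)$. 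The only real subtlety is to pin down the cycle type of the commutator while simultaneously keeping the degree prime and the cover connected; the point of using a full $p$-cycle for $\sigma$ and an involution whose support avoids its own shift is that it resolves both matters at once.
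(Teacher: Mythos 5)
Your proof is correct, and it implements the same underlying strategy as the paper (build an explicit topological origami, then lift a genus-one complex structure) by a genuinely different route. The paper's construction is pictorial: a chain of $\gamma-1$ two-square blocks plus a tail of $l$ single squares, glued as in a figure, giving an origami of degree $3\gamma+l-3$ whose branch fiber consists of $\gamma-1$ points of local degree $3$ and $l$ points of local degree $1$; one then chooses $l\geq 1$ so that the degree is prime and the fiber has at least $r$ points. You instead use the monodromy encoding by a pair of permutations: taking $\sigma=(1\,2\,\cdots\,p)$ and an involution $\tau$ whose support is disjoint from its $\sigma$-shift makes the commutator visibly a product of $2\gamma-2$ disjoint transpositions, so your ramification profile is $(2,\dots,2,1,\dots,1)$ rather than the paper's $(3,\dots,3,1,\dots,1)$. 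Your route buys three things: connectivity is automatic because $\sigma$ alone is transitive; the cycle structure of $[\sigma,\tau]$ is checked by a short algebraic computation rather than read off a figure; and the prime degree is decoupled from the genus, so any sufficiently large prime works. The paper's route buys geometric transparency (one sees the squares and identified vertices directly). One small imprecision worth correcting: non-uniformity is \emph{equivalent} to the commutator having cycles of two distinct lengths, not to having ``a fixed point together with a nontrivial cycle'' (a $2$-cycle plus a $3$-cycle with no fixed points is also non-uniform); your stated condition is only sufficient, but since your construction produces both fixed points and transpositions, this does not affect the argument.
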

\begin{proof}
Let us consider a topological origami defined by the gluing of $p=3\gamma+l-3$ squares, as shown in Figure \ref{figura1}, where $l \geq 1$ (in that figure, each lower side $j$ is glued to an upper side $j$, and each left side $a_{j}$ is glued to a right side $a_{j}$). The gluing procees produces a topological origami pair $(Y, \delta:Y \to E_{0}=S^{1}\times S^{1})$, where $Y$ has genus $\gamma$ and the primage of its unique branch value has cardinality $s=3\gamma+l-3$.
The six vertices of each of the $\gamma-1$ blocks formed of two squares (one above the other) produces a point on $Y$ being a critical point of degree $3$. Each of the two right vertices of the $l$ squares at the right part produces a  point on $Y$ which is not a critical point (the two left vertices of the first square at the left side of the figure are equivalent to the ones produced by the last square at the right). It follows that this is a non-uniform origami. So, we only need to assume $l \geq 1$ such that $p$ is a prime integer and $s \geq r$. Now, take a Riemann surface structure on $E_{0}$ and lift it under $\delta$ to obtain a Riemann surface structure $R$ on $Y$ which makes $\delta$ an origami map and $(R,\delta)$ an origami pair as needed.  
\end{proof}

\begin{figure}[htb]
\centering
\includegraphics[width=\textwidth]{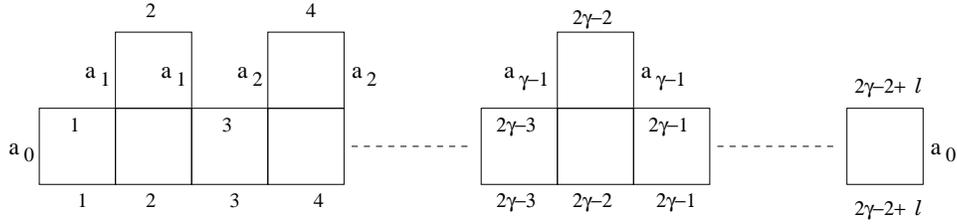}
\caption{An origami of genus $\gamma \geq 2$ with $3\gamma+l-3$ squares and $\gamma+l-1$ vertices}
\label{figura1}
\end{figure}

\subsection{Proof of theorem \ref{main1}}
Let $G$ be some finite group acting as a group of conformal automorphisms of a closed Riemann $X$ of genus $g \geq 2$ such that  the quotient orbifold $X/G$ consists of a closed Riemann surface of genus $\gamma \geq 1$ with some finite number of cone points, $p_{1},\ldots, p_{r}$, such that, for $r>0$, the cone point $p_{j}$ has cone order $m_{j} \geq 2$, and these numbers satisfy $2\gamma+r-2 > \sum_{j=1}^{r}m_{j}^{-1}$. 
As a consequence of the uniformization theorem, there are a Fuchsian group $K$ acting on the hyperbolic plane ${\mathbb H}^{2}$ with presentation
\begin{equation}
K=\langle A_{1},B_{1},\ldots, A_{\gamma},B_{\gamma}, C_{1},\ldots, C_{r}: \prod_{j=1}^{\gamma}[A_{j},B_{j}] \prod_{k=1}^{r} C_{k}=1=C_{1}^{m_{1}}=\cdots=C_{r}^{m_{r}}\rangle,
\end{equation}
where $[A_{j},B_{j}]=A_{j}B_{j}A_{j}^{-1}B_{j}^{-1}$,
and a surjective homomorphism $\theta:K \to G$, whose kernel $\Gamma_{\theta}$ is torsion-free, such that $X={\mathbb H}^{2}/\Gamma_{\theta}$, $X/G={\mathbb H}^{2}/K$ and the regular branched cover $X \to X/G$ is induced by the inclusion $\Gamma_{\theta} \leq K$.

Let us choose an origami pair $(R,\delta:R \to E)$, where $R$ has genus $\gamma$, whose $\delta$-preimage of its branch value $q \in E$ has cardinality at least $r$  and $\delta$ has prime degree (as in Lemma \ref{lema1}). Let us make a choice of points $q_{1},\ldots,q_{r} \in \delta^{-1}(q)$. As a consequence of quasiconformal deformation theory \cite{Nag}, we may find a Fuchsian group $\widehat{K}$ such that ${\mathbb H}^{2}/\widehat{K}$ is the orbifold whose underlying Riemann surface is $R$ and whose cone points are $q_{1},\ldots,q_{r}$ such that $q_{j}$ has cone order $m_{j}$. The group $\widehat{K}$ is isomorphic to $K$ (as abstract groups). Then there is a quasiconformal homeomorphism $W:{\mathbb H}^{2} \to {\mathbb H}^{2}$ conjugating $K$ into $\widehat{K}$. In this case, $S={\mathbb H}^{2}/W \Gamma_{\theta} W^{-1}$ is a closed Riemann surface admitting the group $\widehat{G}=\widehat{K}/W \Gamma_{\theta} W^{-1} \cong G$ as a group of conformal automorphisms and such that the actions of $G$ on $X$ and that of $\widehat{G}$ on $S$ are topologically equivalent (in particular, $S/\widehat{G}$ is also topologically equivalent to $S/G$). 

Consider the origami pair $(S,\beta=\delta \circ \eta)$, where $\eta:S \to R$ is a regular branched cover with deck group $\widehat{G}$.  In this case, the group $\widehat{G}$ is a subgroup of ${\rm Aut}(S,\beta)$ whose conformal action is topologically equivalent to that of $G$. We claim that $\widehat{G} = {\rm Aut}(S,\beta)$ as desired. In fact, let us assume, by 
 the contrary, that $\widehat{G}$ is a proper subgroup of  ${\rm Aut}(S,\beta=\delta \circ \eta)=\widetilde{G}$. It means that the branched cover $\delta:R \to \widehat{\mathbb C}$ factors through $S/\widetilde{G}$. As $\delta$ has prime degree and $\widehat{G} \neq \widetilde{G}$, it must be that $(S,\beta)$ is a regular origami. But as $\eta$ is an unbranched cover and $(R,\delta)$ is non-uniform, neither can be $(S,\beta)$, a contradiction as a regular origami pairs are uniform.


\end{document}